\newtheorem{theorem}{Theorem}
\newtheorem{definition}[theorem]{Definition}
\newtheorem{proposition}[theorem]{Proposition}
\newtheorem{corollary}[theorem]{Corollary}
\newtheorem{lemma}[theorem]{Lemma}
\theoremstyle{remark}
\newtheorem{example}[theorem]{Example}
\newtheorem{algorithm}[theorem]{Algorithm}
\newtheorem{remark}[theorem]{Remark}
\def\CaB{\mathcal{B}}
\def\CaL{\mathcal{L}}
\def\k{\mathbbmss{k}}
\def\N{\mathbb{N}}
\def\Z{\mathbb{Z}}
\def\C{\mathbb{C}}
\def\supp{\mathrm{supp}\, }
\def\rank{\mathrm{rank}\, }
\def\rowspan{\mathrm{rowspan}\, }
\def\rF{\mathrm{rF}}
\title{On decomposable semigroups and applications}
\author{J. I. Garc\'{\i}a-Garc\'{\i}a
\footnote{Departamento de Matem\'aticas, Universidad de C\'adiz,
E-11510 Puerto Real (C\'{a}diz, Spain). E-mail: ignacio.garcia@uca.es.
Partially supported by MTM2007-62346 and FQM-366.}
\\ M.A. Moreno-Fr\'{\i}as
\footnote{Departamento de Matem\'aticas, Universidad de C\'adiz,
E-11510 Puerto Real (C\'{a}diz, Spain). E-mail:
mariangeles.moreno@uca.es.
Partially supported by MTM2008-06201-C02-02 and FQM-298.}\\
A. Vigneron-Tenorio
\footnote{Departamento de Matem\'aticas, Universidad de C\'adiz,
E-11405 Jerez de la Frontera (C\'{a}diz, Spain). E-mail: alberto.vigneron@uca.es.
Partially supported by MTM2007-64704 and FQM-366.}}
\date{}
\begin{document}

\maketitle

\begin{abstract}

The aim of this work is to reduce the complexity of the available algorithms for computing the generator sets of a semigroup ideal  by using the Hermite normal form. In order to achieve it we introduce the concept of decomposable semigroup. If a semigroup is decomposable, the computation of its ideal is equivalent to compute the ideals of each semigroup in the decomposition, thus obtaining a reduction of the complexity of the algorithms. Furthermore, since these computations are mutually independent, they can be carried out in parallel. The concept of decomposable variety is introduced and a combinatorial characterization of decomposable semigroup is obtained. Some applications are also provided.

\smallskip
{\small \emph{Keywords:} Algebraic Statistics, decomposable semigroup, decomposable variety, HNF-decomposition, lattice ideal, Markov bases, semigroup ideal, simplicial complex.}

\smallskip
{\small \emph{MSC-class:} 13F20 (Primary), 15B36, 05E40, 13F55 (Secondary).}
\end{abstract}

\section*{Introduction}

Let $\k$ be a field. Given a finitely generated subsemigroup $S$ of an Abelian group and fixed one of its system of generators $A=\{a_1,\ldots ,a_n\}$, the semigroup ideal of $S$ is the binomial ideal (see \cite{Herzog70})
$$I_S=\langle X^\alpha -X^\beta | \sum_{i=1}^n \alpha_ia_i=\sum_{i=1}^n \beta_ia_i\rangle \subset \k[X_1,\ldots ,X_n]$$ with
$\k[X_1,\ldots ,X_n]$ the $S-$graduated polynomial ring where the $S$-degree $a_i$ is assigned to the indeterminate $X_i$. Thus, the $S$-degree of $X^\alpha =X_1^{\alpha_1}\cdots X_n^{\alpha_n}$ is $\sum _{i=1}^n \alpha _i a_i\in S$ (see \cite{Miller} for further details).

The study of semigroup ideals began in the last third of the 20th century (see \cite{Herzog70} and the references therein) and has become an important research area due to its connections with other scientific fields such as Algebraic Statistic, Coding Theory, Combinatorics, Integer Programming, Toric Geometry, etc. One of the most prominent topics in this research area is the improvement of computational aspects for checking ideal properties and for computing system of generators (see \cite{BCMP}, \cite{Hemmecke}, \cite{Sturmfels95} and \cite{Vigneron} and the references therein).

The main purpose of this work is to present a preprocessing method of polynomial complexity that improves the computation of the ideal for a decomposable semigroup. A semigroup $S=\langle A \rangle $ is {\em decomposable} if it is the direct sum of some proper subsemigroups $S_i=\langle A_i \rangle$ and then
\begin{equation}\label{def_ideal_descomp}I_S= I_{S_1}+ \cdots + I_{S_t},\end{equation} with $A=\sqcup _{i=1}^t A_i,$ otherwise it is irreducible. Note that each ideal $I_{S_i}$ is included in a polynomial ring with as many variables as elements belong to $A_i$  (strictly less than $n$). Theorem \ref{main_theorem} states that a semigroup is decomposable if and only if the Hermite normal form of the matrix associated to its system of generator has a special form. Therefore it can be detected whether a semigroup is decomposable and obtain its decomposition by using an algorithm of polynomial complexity (see \cite{hnf}). Since the complexity for computing a semigroup ideal is simply exponential in the number of variables (see \cite{Sturmfels93}), for decomposable semigroups the exponent of the complexity can be reduced to the maximum of the number of variables of $I_{S_i}$ with $i=1,\ldots,t.$
In summary, the complexity is reduced by using a preprocessing algorithm of polynomial complexity and once we know the decomposition of the semigroup, the computation of its generating set can be done in parallel to further reduce its computation time.

Decomposition (\ref{def_ideal_descomp}) allows us to take advantages of computation of some generating sets of $I_S.$ In this way, each type of generating set of $I_S$ (Gr\"{o}bner bases, Graver bases, universal
Gr\"{o}bner bases, Markov bases and universal Markov bases) can be obtained directly from the corresponding generating set of the ideals $I_{S_i}$ in (\ref{def_ideal_descomp}). In addition, some properties of decomposable semigroups and their ideals can be studied from the semigroups and ideals that appear in their decompositions. In this way, we prove that $I_S$ is a complete intersection if and only if $I_{S_i}$ is a complete intersection for every $i,$ and that a decomposable semigroup is a gluing if and only if at least one of the subsemigroups of its decomposition is a gluing.

The above purely algebraic decomposition has a geometric interpretation. Since every ideal defines an algebraic variety, decomposition (\ref{def_ideal_descomp}) can be used to obtain a decomposition of the variety and that allows us to introduce the concept of {\it decomposable variety} which are varieties that admit parametrizations with simple formulations.

If $S$ satisfies that $S\cap (-S)=\{0\},$ Theorem \ref{caracterizacion_combinatoria} gives a combinatorial characterization of decomposable semigroups by using the simplicial complex introduced in \cite{Eliahou} and the simplicial complex used in \cite{BCMP} (see \cite{OjVi} for further details).

The contents of this paper are organized as follows.
In Section \ref{preliminares} some definitions, notation and some known results are introduced.
In Section \ref{reduccion_matriz} it is showed how the Hermite normal
form can be used to obtain a diagonalization of a matrix which is
called HNF-diagonal matrix. Algorithm \ref{alg_matrix_fresi} is the
key that allows us to easily compute the decomposition of a semigroup.
In Section \ref{semigroup_fresi_decomposition}
decomposable semigroups are characterized by using the HNF-diagonalization of a matrix associated to $S.$
In Section \ref{sec_varieties} decomposable varieties are introduced and a method to obtain simple parametrizations is presented.
In Section \ref{ideal_descomponible} some relationships between decomposable semigroups (and their ideals) and the semigroups (and their ideals) appearing in the decomposition are showed.
The goal of Section
\ref{sec_caract_comb} is to obtain
a combinatorial characterization of
decomposable semigroups.
Finally in Section
\ref{ejemplo}, our results are illustrated with an example from
Algebraic Statistics.

\section{Preliminaries}\label{preliminares}

This work includes many and different mathematical objects, in this section we summarize some definitions and results that are useful for the understanding of this work.

Start by defining the elements we use to describe the decomposition of a semigroup.
Let $A=\left\{a_1,a_2,\ldots , a_n\right\}$ be the system of generators of the semigroup $S$.
Denote by $I_{\langle A'\rangle}$ the ideal of the semigroup $\langle A'\rangle$ where $A'=\{a_{i_1},\dots,a_{i_j}\}$ is a subset of $A.$
This ideal is included in the polynomial ring $\k [X_{i_1},\ldots,X_{i_j}]\subset \k[X_1,\ldots,X_n]$.
Denote by $\k [A']$ the subring of $\k [X_1,\ldots ,X_n]$ and its monomials by $X_{A'}$.

Given a lattice $\CaL\subset\Z^n,$ it has associated the binomial ideal  $\langle X^{\alpha ^+}-X^{\alpha ^-} | \alpha\in \CaL \rangle\subset \k[X_1,\ldots,X_n]$ where $\alpha ^+\in \Z_+^n$ and $\alpha ^-\in \Z_+^n$ are the unique vectors with disjoint supports such that $\alpha = \alpha ^+ -\alpha ^-$(see \cite{Sturmfels95}). It is well known that the set of the lattice ideals is equal to the set of semigroup ideals (see \cite{Vigneron}). In our case, if $\ker S$ is the lattice of integer solutions of the linear system $Ax=0,$ the lattice ideal $I_{\ker S}$ is equal to the semigroup ideal $I_S.$ Let  $\supp (\alpha)$ be the set $\{i|\, \alpha _i\neq 0 \}$ where $\alpha \in \N ^n.$
In the same way, $\supp (X ^\alpha)$ is $\supp (\alpha)$ and $\supp (X^{\alpha ^+}-X^{\alpha ^-})=\supp (X ^{\alpha^+})\cup \supp (X ^{\alpha^-}).$

In order to fix notation, for a decomposable semigroup $S=\langle A\rangle,$ define the decomposition of $S$ as the unique irreducible decomposition (up to permutations)
\begin{equation}\label{def_semig_descomp} S=\langle A_1\rangle\oplus \langle A_2\rangle \oplus \cdots \oplus \langle A_t\rangle\end{equation} where $\oplus$ is the direct sum of subsemigroups and $A=\sqcup _{i=1}^t A_i.$ If $S_i$ is the subsemigroup generated by $\langle A_i\rangle$ for $i=1,\dots, t,$ we obtain $I_S = I_{S_1}+\cdots +I_{S_t}.$ Considering above decomposition, the lattice $\ker S$ is equal to  $\ker (\langle A_1\sqcup A_2\sqcup \cdots \sqcup  A_t\rangle)$ up to a permutation of its coordinates. In order to denote this equality, up to permutations, we will use the symbol $\equiv$ (for instance $\ker S \equiv \ker (\langle A_1\sqcup A_2 \sqcup \cdots \sqcup A_t\rangle)$).

An interesting problem is the computation of minimal system of generators of semigroups $S$ with the property $S\cap (-S)=\{0\}.$ By Nakayama's lemma (see \cite{BCMP}),
every minimal generating sets of $I_S$ have the same cardinality. Denote by $Betti(S)$ the set of $S- $degrees of the elements of a minimal system of generators of $I_S$. It is known that if a semigroup $S$ is cancellative, $S\cap (-S)=\{0\}$ and it verifies  the condition of ascending chain (see \cite{Bullejos}), then $Betti(S)$ is the same for every minimal system of generators of $I_S$ and every system of generators of $I_S$ has the same number of binomials of $S$-degree $m\in S$ for all $m\in Betti(S)$. Any minimal system of generators of the ideal is known as a Markov basis.

Indispensable binomials (see \cite{Aoki}, \cite{Charalambous07}, \cite{GarciaOjeda} and \cite{OjVi2}) are needed to study the uniqueness of Markov bases, they are the binomials that belong (up to a scalar multiple) to every system of binomial generators of an ideal. There exists a unique Markov basis (up to a scalar multiple of its elements) of an ideal if and only if such ideal is generated by its indispensable binomials. In any case, to check if a Markov basis is unique  it is necessary to perform computations of high complexity in $n$ indeterminates.

Given the previous definitions, the ideal of a semigroup $I_S$ is a complete intersection if it is generated by $n-\rank (\ker S)$ binomials (see \cite{Fischer}). The study of these kind of ideals is classical in Commutative Algebra and Algebraic Geometry.

Other semigroups appearing in the literature are the
semigroups which are the gluing of two semigroups (see \cite{Rosales1}). A semigroup $S$ minimally generated by $C_1\sqcup C_2$
is the gluing of $S'=\langle C_1\rangle$ and $S''=\langle C_2\rangle,$ if there exists a set of generators of $I_S$ of the form
$\rho_1\cup \rho_2 \cup \{X^\alpha -X^\beta\},$ where $\rho_1,\rho_2$ are sets of generators of $I_{S'}$ and $I_{S''},$ and
$X^\alpha -X^\beta\in I_S$ with $X^\alpha\in \k [C_1]$ and $X^\beta
\in \k [C_2].$ Equivalently, there exists $d\in S'\cap
S''\setminus\{0\}$ such that $G(S')\cap G(S'')=G(\{d\}),$ where
$G(S'),$ $G(S'')$ and $G(\{d\})$ are the associated commutative
groups of $S',$ $S''$ and $\{d\}$ (see Theorem 1.4 in
\cite{Rosales1} for details). Note that $d$ is the $S$-degree of
$X^\alpha.$ This element $d\in S$ is called the gluing degree
associated to the partition, in this case $X^\alpha -X^\beta$ is
called a gluing binomial.

\section{HNF-diagonalization of matrices}\label{reduccion_matriz}

Given a matrix $B\in \Z ^{p\times q},$ denote by $F(B)\subset \Z ^q$ the set of rows of $B,$ and $\rF(B)=\rowspan _\Z(F(B)).$

\begin{definition}\cite[The "row version" of Definition 2.4.2]{Cohen}
We will say that an $m\times n$ matrix $M=(m_{ij})$ with integer
coefficients is in Hermite normal form (abbreviated HNF) if there
exists $r \leq m$ and a strictly increasing map $f$ from $[1,r]$
to $[1,n]$ satisfying the following properties:
\begin{enumerate}
 \item for $1\leq i \leq r$, $m_{i f(i)}\geq 1$, $m_{ij}=0$
 if $j< f(i)$ and $0 \leq m_{i f(k)} < m_{k f(k)}$ if $k> i$,
 \item the last $m-r$ rows of $M$ are equal to $0$.
\end{enumerate}
\end{definition}

Denote by $GL_n(\mathbb{Z})$ the group of matrices with integer
 coefficients which are invertible, i.e. whose determinant is equal
 to $\pm 1$.

\begin{theorem}\cite[Theorem 2.4.3]{Cohen}
 Let $L$ be an $m\times n$ matrix with coefficients in $\mathbb{Z}$.
 Then there exists a unique $m\times n$ matrix $H=(h_{i j})$ in HNF
 of the form $UL=H$ with  $U\in GL_n(\mathbb{Z}).$ In this case, we write \mbox{HNF}(L)=H.
\end{theorem}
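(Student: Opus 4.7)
The plan is to prove existence constructively and to deduce uniqueness from the rigidity of the normalization conditions.

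For existence, I would proceed column by column. At each stage, working within the lower-right submatrix of rows not yet processed, locate the leftmost column containing a nonzero entry; this supplies the next pivot column $f(i)$. Using Bezout's identity encoded as a $2\times 2$ unimodular row operation, any pair of entries $(a,b)$ in that column can be replaced by $(\gcd(a,b),0)$; iterating drives the column to a single positive entry in the topmost active row, with zeros below. A unimodular swap brings this entry into the prescribed position, and one then advances to the next column. Once all pivots are placed, a final sweep reduces the entries above each pivot modulo that pivot, enforcing the size inequalities in the definition. The accumulated row operations form a matrix $U\in GL_m(\mathbb{Z})$ (the statement's ``$GL_n$'' appears to be a misprint, since $U$ multiplies $L$ on the left) with $UL$ in HNF.

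For uniqueness, suppose $UL=H$ and $U'L=H'$ are both in HNF. Setting $V=U'U^{-1}\in GL_m(\mathbb{Z})$, we have $VH=H'$, and both matrices have the same $\mathbb{Z}$-row span, $\rF(H)=\rF(H')$. I would compare the two HNFs column by column from the left. The pivot map $f$ is intrinsic to the row span, since $f(i)$ is the index of the leftmost nonzero coordinate of the $i$-th vector in any basis adapted to the standard flag; this forces $H$ and $H'$ to have pivots in identical positions. The pivot values themselves are the unique positive generators of certain cyclic quotients built from $\rF(H)$, so they agree. Finally, with pivot values matched, the normalization on entries above each pivot determines them as the unique reduced representatives modulo the pivot, yielding $H=H'$.

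The main obstacle is the uniqueness part, where one must check that the size conditions are strong enough to eliminate all freedom left after matching pivots. The clean way to handle this is an induction on columns from left to right, using that the portion of $H$ already determined rigidly constrains what the next column can look like. This is exactly the classical statement worked out in detail in the reference \cite{Cohen} cited at the start of the section.
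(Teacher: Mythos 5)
Your sketch is correct, but note that the paper does not prove this theorem at all: it is quoted as a classical result, with the section's opening line deferring all ``details and algorithms'' to \cite{Cohen}, exactly as you do in your last sentence. So the comparison is between your standard textbook argument (existence by left-to-right elimination with Bezout-type $2\times 2$ unimodular row operations plus a final reduction sweep above the pivots; uniqueness by observing that the pivot positions, the pivot values, and then the reduced entries above the pivots are all intrinsic to the row lattice $\rF(L)$) and a bare citation. Your outline is the right one, and the uniqueness step you flag as the delicate point is indeed where the work lies; it can be closed either by your flag/quotient argument or by the equivalent argument that if $VH=H'$ with $V$ unimodular and both matrices in HNF with the same row span, then $V$ is forced to be upper unitriangular on the nonzero rows and the reduction conditions force it to be the identity. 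Your remark about the misprint is also consistent with the paper: since the paper later writes $P_2L=H$ with $P_2$ unimodular and works with row spans, the transforming matrix must be $m\times m$, i.e.\ $U\in GL_m(\mathbb{Z})$; the stated ``$GL_n$'' (and the condition ``$m_{if(i)}\leq 1$'', surely ``$\geq 1$'') are artifacts of transcribing Cohen's column-style HNF, where $H=LU$ with $U\in GL_n(\mathbb{Z})$ acting on the right, into a row-style statement.
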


For a given positive integer $n$ and $1\leq i,j\leq n, (i\neq j)$, consider $C_{i\leftrightarrow j}$, the matrix obtained from the
identity matrix $I_n$ interchanging the columns $i$ and $j$. These
matrices are known as column elementary matrices and they fulfill
$\det(C_{i\leftrightarrow j})=-1$ and $C_{i\leftrightarrow
j}=C_{i\leftrightarrow j}^{-1}$. Let $M$ be an $m\times n$ matrix
with integer coefficients. The matrix $MC_{i\leftrightarrow j}$ is
the matrix resulting from interchanging the columns $i$ and $j$ in
 $M$. In the sequel, we denote this matrix by
$Q$. This matrix is called {\it column permutation matrix}.
Similarly, {\it row permutation matrix} can be defined. Observe that
permutation matrices are nonsingular.

\begin{definition}\label{HNF-diagonal} An $m \times n$ matrix $D$ with integer coefficients is an
HNF-diagonal matrix if it satisfies the following conditions:
\begin{enumerate}
\item the null rows are located on the bottom side of the matrix,
\item the null columns are located  on the right side of the matrix,
\item every block  $D_i$ has maximal rank, it is in HNF and for each disjoint partition
of  $F(D_i),$ $B_1 \sqcup B_2,$ we have:
$(\bigcup_{f\in F(B_1)} \supp (f))\cap (
\cup_{f\in F(B_2)} \supp (f))\neq \emptyset.$
\end{enumerate}

That is, an HNF-diagonal matrix has the
following shape:
\begin{equation}\label{D}\left(\begin{array}{ccccccccc} D _1 & \Theta & \Theta & \cdots & \Theta
\\ \Theta & D_2 & \Theta & \cdots & \Theta \\
\Theta & \Theta & D_3 & \cdots & \Theta \\ \vdots & \vdots & \vdots
& \ddots & \vdots
\\  \Theta & \Theta & \Theta &  \cdots &   D _t
\end{array}\right),\end{equation}
where for  $i=1,\ldots ,t-1,$ $D_i$ is an HNF-matrix with non-zero
columns, $\Theta$ are the null matrices with the corresponding
orders and $D_t$ can be a null matrix or an HNF-matrix with zero rows
and/or columns.
\end{definition}

\begin{definition}\label{matrix_fresi} An $m\times n$ matrix
$L$ with integer coefficients is HNF-diagonalizable if there exists
a unimodular matrix P and a column permutation matrix $Q$ such that
$PLQ$ is an HNF-diagonal matrix.
\end{definition}

This is equivalent to say that an $m\times n$ matrix $L$ with
integer coefficients is HNF-diagonalizable if there exists a column
permutation
matrix $Q$ such that $\mbox{HNF}(LQ)$ is an HNF-diagonal matrix.\\

\begin{lemma}\label{HNF_equiv}
  Let $L$ be an $m\times n$ matrix with integer coefficients. Then $L$ is
  HNF-diagonalizable if and only if there exists a column permutation
  matrix
 $Q$ and an HNF-diagonal matrix  $D$  such that
$\rF(LQ)=\rF(D),$ and then
$\rF(L)\equiv \rF(LQ)= \rF(D)=\rF(D_1)\times \cdots \times \rF(D_t).$
\end{lemma}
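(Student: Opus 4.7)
The plan is to establish the biconditional using two fundamental facts about integer matrices: unimodular row operations preserve the row $\Z$-lattice (for the $\Rightarrow$ direction) and the Hermite normal form is unique (for the $\Leftarrow$ direction); the concluding equality chain will then follow from reading off the block shape of $D$. For $\Rightarrow$, I would assume $L$ is HNF-diagonalizable with $PLQ = D$ for some unimodular $P$, column permutation $Q$, and HNF-diagonal $D$. Left multiplication by $P$ is an invertible $\Z$-linear change of basis on the rows, so it preserves the row $\Z$-lattice: $\rF(LQ) = \rF(PLQ) = \rF(D)$.

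For $\Leftarrow$, I would assume the existence of $Q$ and an HNF-diagonal $D$ with $\rF(LQ) = \rF(D)$, and show that $D$ itself is in HNF as an $m\times n$ matrix (not merely built from HNF blocks). The key point is that because each non-final block $D_i$ is in HNF with no zero columns, its pivot columns lie entirely inside its own block position; hence the pivot columns of $D_{i+1}$ fall strictly to the right of those of $D_i$, making the overall pivot map of $D$ strictly increasing, with the null rows (all from the degenerate last block $D_t$) collected at the bottom. Once $D$ is recognised as being in HNF, uniqueness of the Hermite normal form forces $D = \mathrm{HNF}(LQ)$, and the HNF existence theorem supplies the required unimodular $P$ with $PLQ = D$, so $L$ is HNF-diagonalizable.

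For the concluding equality chain, $\rF(L) \equiv \rF(LQ)$ is immediate from the definition of $\equiv$, since right-multiplication by the column permutation $Q$ merely reorders coordinates of every row; $\rF(LQ) = \rF(D)$ is the condition we are assuming (or have just proven); and $\rF(D) = \rF(D_1) \times \cdots \times \rF(D_t)$ because each row of $D$ is supported in the columns of a single block, so the row span decomposes naturally as the Cartesian product of the row spans of the blocks, embedded in their respective coordinate slots.

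The main obstacle I anticipate is the structural verification that a block-diagonal arrangement of HNF blocks (with the zero rows and zero columns placed according to items 1 and 2 of Definition \ref{HNF-diagonal}) is itself in HNF; this requires tracking the pivot map across block boundaries and handling the degenerate last block $D_t$, which the definition permits to be zero or to contain further zero rows and columns, without disrupting the strict monotonicity of the global pivot map. Once this observation is nailed down, the remainder of the argument is routine bookkeeping with unimodular row operations and the uniqueness statement of the Hermite normal form.
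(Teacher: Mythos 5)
Your proof is correct and takes essentially the same route as the paper, whose one-line proof just cites Cohen's \emph{Test for Equality} (the Hermite normal form is a complete invariant of the row lattice): your forward direction is invariance of $\rF$ under left unimodular multiplication, and your backward direction---verifying that an HNF-diagonal matrix is itself in HNF and invoking uniqueness to conclude $D=\mbox{HNF}(LQ)$, hence $PLQ=D$ for some unimodular $P$---is exactly the content of that citation, spelled out. The structural check you flag (pivot columns of later blocks lie strictly to the right, the off-diagonal blocks are zero so the reduction conditions hold trivially, and the null rows sit at the bottom) is the only detail the paper leaves implicit, and you handle it correctly.
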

\begin{proof}
 The proof follows from  {\it Test for Equality} (see \cite[p.74]{Cohen})
\end{proof}

Thus a matrix $L$ is HNF-diagonalizable if and only if the lattice $\rF(L)$ is (up to a permutation of its coordinates) the Cartesian product of some of its sublattices.

\begin{lemma}\label{aa}There exists an algorithm to determine if a matrix $L$
is HNF-diagonalizable.  If $L$ is HNF-diagonalizable this algorithm  finds two matrices $Q$ and
$P$ such that $PLQ$ is an HNF-diagonal matrix.
\end{lemma}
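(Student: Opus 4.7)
The approach is to reduce $L$ to its Hermite normal form and then group the columns of the reduced matrix into the connected components of a support graph. First, I would apply the classical HNF algorithm to obtain a unimodular $U\in GL_m(\Z)$ with $H=UL=\mbox{HNF}(L)$; this preserves $\rF(L)$ and places the non-zero rows with pivots in strictly increasing columns. Next, I would build a graph $G$ on the column indices $\{1,\ldots ,n\}$ joining $i$ and $j$ whenever some row of $H$ has both the $i$-th and $j$-th entries non-zero, and compute its connected components $C_1,\ldots ,C_t$ (together with $C_0$, the set of null columns not appearing in $G$). By construction the support of every non-zero row of $H$ lies in exactly one $C_k$.

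From this data I would construct the permutations: let $Q$ be the column permutation matrix writing the columns of $C_1,\ldots ,C_t$ consecutively (preserving within each $C_k$ the order inherited from $H$) with $C_0$ at the end, and let $P_r$ be the row permutation matrix that groups the non-zero rows by the $C_k$ containing their support, sorted within each block by pivot column, and places the zero rows at the bottom. Setting $P=P_r U$, which is unimodular, one obtains $PLQ=P_r H Q$ as a block-diagonal matrix with blocks $D_1,\ldots ,D_t$.

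The final step is to verify the three conditions of Definition~\ref{HNF-diagonal}. Conditions (1) and (2) are immediate from the choice of $P_r$ and $Q$. For (3), each $D_k$ has full row rank (its rows are those of $H$ whose support is contained in $C_k$, hence linearly independent), and each $D_k$ is in HNF because within the block the rows are sorted by pivot column and those pivots lie inside $C_k$ by the inherited HNF structure of $H$. The irreducibility half of (3) is precisely the statement that the columns of $D_k$ cannot be partitioned into two non-empty classes with disjoint supports, which is exactly the defining property of $C_k$ being a connected component of $G$. This last clause, guaranteeing the maximality of the decomposition, is the main conceptual point; everything else is routine bookkeeping. Since Definition~\ref{HNF-diagonal} admits $t=1$ as a valid case, the algorithm in fact always succeeds, and its value lies in producing the finest (hence canonical) block decomposition, which is what the subsequent sections will exploit.
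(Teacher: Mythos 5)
Your construction --- compute $H=\mbox{HNF}(L)$ via a unimodular $U$, form the graph on column indices whose edges are given by the supports of the rows of $H$, take its connected components, and realize the block form through a row permutation $P_r$ and a column permutation $Q$ with $P=P_rU$ --- is exactly the paper's Algorithm \ref{alg_matrix_fresi}, and your verification that each resulting block has maximal rank, is in HNF, and has support-connected rows (condition 3 of Definition \ref{HNF-diagonal}) is correct.

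The gap is in the ``determine'' half of the statement, and it is hidden in your last two sentences. You dispose of the decision problem by declaring $t=1$ admissible, so that ``the algorithm always succeeds.'' That reading cannot be the intended one: with it every integer matrix would be HNF-diagonalizable, and Theorem \ref{main_theorem} would then assert that every semigroup is decomposable; accordingly, the third step of Algorithm \ref{alg_matrix_fresi} explicitly returns ``not HNF-diagonalizable'' when only one component is found, i.e.\ at least two blocks are required. Under that convention the nontrivial direction is the one you do not argue: if $L$ is HNF-diagonalizable in the sense of Definition \ref{matrix_fresi} (some $PLQ$ is HNF-diagonal with $t\geq 2$ blocks), why must the support graph of $\mbox{HNF}(L)$ itself already be disconnected, so that your component computation finds $t\geq 2$? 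Your construction proves only the converse (disconnected support of $H$ implies diagonalizable). The missing step is supplied by Lemma \ref{HNF_equiv} together with the uniqueness of the Hermite normal form: if, after a column permutation, $\rF(L)$ is the product of lattices supported on disjoint column sets, then interleaving HNF bases of the factors (padded with zeros and sorted by pivot column) is again a matrix in HNF with row span $\rF(L)$, hence coincides with $\mbox{HNF}(L)$ up to the order of its rows; consequently every row of $\mbox{HNF}(L)$ is supported in a single part and the graph has at least two components. The same argument is what would justify your unproved claim that the component decomposition is the ``finest (hence canonical)'' one: finest among block structures of $H$ is immediate from connectedness, but finest among all block structures attainable as $PLQ$ requires precisely this uniqueness step.
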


\begin{proof}
By Lemma \ref{HNF_equiv}, $L$ is HNF-diagonalizable if and only if there exists a column permutation
matrix $Q$ and an HNF-diagonal matrix  $D$  such that $\rF(LQ)=\rF(D)$.
Since  $Q^{-1}=Q$, then $\rF(L)=\rF(DQ^{-1})=\rF(DQ)$.
Therefore $\mbox{HNF}(L)=\mbox{HNF}(DQ)$. Trivially $\mbox{HNF}(DQ)$ is equal to $DQ$ up to a permutation of its rows.

In any case, the above condition can be detected by checking the supports of the rows of $\mbox{HNF}(L).$
\end{proof}

An algorithm satisfying Lemma \ref{aa} is now presented.

\begin{algorithm}\label{alg_matrix_fresi}{HNF-diagonalization}\\
{\em In:}  An $m\times n$ matrix $L$ with integer coefficients.\\
{\em Out:} This algorithm  detects  if  $L$ is HNF-diagonalizable
and finds two matrices  $Q$ and  $P$ satisfying  Definition
\ref{matrix_fresi}.
\begin{enumerate}
\item Compute $H$ the Hermite normal form of $L,$
    $$H=\left(\begin{array}{c}h_1\\h_2\\ \vdots \\ h_m \end{array}\right) \in \Z^{m\times n}.$$
    Let $P_2$ be a unimodular matrix such that  $P_2L=H.$

\item Initialize $\Lambda =\{2,\ldots ,m \},$ $\Lambda _j$ is the $j-$th element of $\Lambda$ and
 $B_i=\supp h_i,$ $F_i=\emptyset$ for every $i=1,\ldots ,m.$ Set $F_1=\{1\}$ and $i=1.$

\item \label{llamada1} For $j=1$ to card($\Lambda)$ (the cardinality of $\Lambda$) do:

\begin{itemize}
\item If $B_i\cap B_{\Lambda _j}\neq \emptyset,$ $B_i=B_i\cup B_{\Lambda _j},$ $F_i=F_i\cup \Lambda _j$ and
$\Lambda = \Lambda \setminus \Lambda _j.$
\item $j=j+1.$
\item If $\Lambda =\emptyset$ and $i=1$ then $L$ is not HNF-diagonalizable and the algorithm ends.
\item If $\Lambda =\emptyset$ and $i\neq 1$ then $L$ is HNF-diagonalizable and go to Step \ref{llamada2}.
Otherwise, $i=i+1$ and go to Step \ref{llamada1}.
\end{itemize}

\item \label{llamada2} Let $P_1$ be the row permutation matrix  to place the rows
of $H$ according to the sets  $F_j,$
$$M=P_1H=\left(\begin{array}{c}h_{F_{1,1}}\\ \vdots\\ h_{F_{1,{\rm card}(F_1)}} \\ h_{F_{2,1}}\\ \vdots\\
h_{F_{2,{\rm card}(F_2)}}\\ \vdots \\ h_{F_{i,1}}\\ \vdots\\
h_{F_{i,{\rm card}(F_i)}} \end{array}\right) \in \Z^{m\times
n},$$where $F_{j,t}$ is the $t-$th element in $F_j,$ and let $Q$ be
the column permutation matrix such that $MQ$ is the HNF-diagonal
matrix. Observe that the matrix  $Q$ is determined by the sets
$B_j$ modified in Step 3 and the zero-columns.

\item The matrices $Q$ and $P=P_1P_2$ are the matrices we are looking for.
\end{enumerate}

\end{algorithm}

Given a matrix $L,$ from now on assume that its associated
HNF-diagonal matrix  has the same form as (\ref{D}).

\section{Decomposable semigroups}\label{semigroup_fresi_decomposition}

In this section decomposable semigroups are characterized by
means of the HNF-diagonalization of a matrix which rows are a
base of the lattice $\ker S.$ We present a lemma that is a
useful tool for this characterization.

\begin{lemma}\label{lemma_ideal}
$S=\langle A\rangle$ is a decomposable semigroup if and only if
$\ker S\equiv  \ker S_1\times \cdots \times \ker S_t.$
\end{lemma}

\begin{proof}

Assume that $S=\langle A\rangle$ is a decomposable semigroup as in (\ref{def_semig_descomp}) and let $\alpha$ be an element belongs to $\ker S.$ Then $\alpha$ can be expressed as the vector $(\alpha _1,\ldots ,\alpha_t)$ where each $\alpha_i$ corresponds to the coordinates of $\alpha$ related to $A_i$ for $i=1,\ldots ,t,$ (i.e. $\alpha \equiv (\alpha _1,\ldots ,\alpha_t)$).

Suppose now without loss of generality that $\alpha _1\notin \ker S_1.$ Hence $A_1\alpha_1\neq 0$ and $A_1\alpha_1^+ \neq A_1\alpha_1^-.$ For any $i,$ take $n_i=A_i\alpha_i^+\in S$ and  $m_i=A_i\alpha_i^-\in S.$ So the element $m=A \alpha ^+$ can be written as $n_1+\cdots +n_t$ and $m_1+\cdots +m_t$ with $n_1\neq m_1.$ In this case, one obtain that $S$ is not the direct sum $S=S_1\oplus \cdots \oplus S_t.$ Then $\alpha _1\in \ker S_1$ and $\ker S\equiv  \ker S_1\times \cdots \times \ker S_t.$

Conversely, let $\ker S$ equivalent to $\ker S_1\times \cdots \times \ker S_t.$ Trivially, $S=S_1 + \cdots + S_t.$ So we only need to prove that for any $m\in S$ such that $m=n_1+\cdots +n_t = m_1+\cdots +m_t$ with $n_i$ and $m_i$ belong to $S_i$ for all $i,$ each $n_i$ must be equal to $m_i.$ As $n_i,m_i\in S_i,$ there exist $\alpha_i$ and $\beta_i$ nonnegative integer vectors such that $n_i=A_i\alpha_i$ and $m_i=A_i\beta_i.$ Therefore $m=\sum_i A_i\alpha _i =\sum _i A_i \beta _i$ and $(\alpha_1-\beta_1,\ldots ,\alpha_t-\beta_t)\in \ker S_1 \times \cdots \times \ker S_t \equiv \ker S.$ Then $\alpha _i-\beta _i$ belongs to $\ker S_i$ and $n_i=m_i$ for each $i.$

\end{proof}

The aim of the following result is the  characterization of
decomposable semigroups by using HNF-diagonalization.

\begin{theorem}\label{main_theorem}
Let $S$ be a semigroup and $L$ be a matrix such that $\ker S=\rF(L)$. Then $S$ is a decomposable semigroup if and only if $L$ is an HNF-diagonalizable
matrix.
\end{theorem}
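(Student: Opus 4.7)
The plan is to combine the two preceding lemmas, which each translate their condition into the same kind of Cartesian product decomposition of $\ker S=\rF(L)$. Lemma \ref{lemma_ideal} says decomposability is equivalent to $\ker S\equiv\ker S_1\times\cdots\times\ker S_t$, and Lemma \ref{HNF_equiv} says HNF-diagonalizability of $L$ is equivalent to $\rF(L)\equiv\rF(D_1)\times\cdots\times\rF(D_t)$ with the $D_i$ supported on disjoint columns. The proof therefore reduces to matching the blocks $D_i$ with bases of the sublattices $\ker S_i$, and to matching irreducibility of each $S_i$ with condition 3 of Definition \ref{HNF-diagonal}.

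For the forward direction, starting from $S=S_1\oplus\cdots\oplus S_t$ and the induced partition $A=A_1\sqcup\cdots\sqcup A_t$, I would take $D_i$ to be a matrix in Hermite normal form whose rows form a $\Z$-basis of $\ker S_i$, viewed inside the coordinates of $A_i$. Arranging these blocks block-diagonally, with zero columns for any generators that appear in no relation, produces a matrix $D$ with $\rF(D)\equiv\rF(L)$. Irreducibility of each $S_i$ forbids any disjoint-support refinement of $\rF(D_i)$, which is precisely condition 3 of Definition \ref{HNF-diagonal}. Lemma \ref{HNF_equiv} then yields that $L$ is HNF-diagonalizable.

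For the reverse direction, given an HNF-diagonal form of $L$ with blocks $D_1,\ldots,D_t$ supported on disjoint index sets $I_1,\ldots,I_t$ (plus a possible set $I_0$ of zero columns, which corresponds to generators appearing in no nontrivial relation), I would define $A_i=\{a_j\mid j\in I_i\}$ and $S_i=\langle A_i\rangle$. The main technical point is to verify $\ker S_i=\rF(D_i)$; one inclusion is immediate because each row of $D_i$ is already a relation among the generators in $A_i$, and the other uses that any element of $\ker S_i$ lies in $\ker S$ with support in $I_i$, so by the Cartesian product structure it must coincide with its projection onto the factor $\rF(D_i)$. Lemma \ref{lemma_ideal} then delivers $I_S=I_{S_1}+\cdots+I_{S_t}$, and condition 3 of Definition \ref{HNF-diagonal} forces each $S_i$ to be irreducible.

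The main obstacle is the coordinate bookkeeping in the reverse direction: carefully transferring a disjoint-support Cartesian product decomposition of $\ker S$ back to a genuine semigroup decomposition and, in particular, establishing the equality $\ker S_i=\rF(D_i)$ rather than merely the inclusion $\rF(D_i)\subseteq\ker S_i$. The zero-column generators also require a brief remark, since they can either be attached as free summands or absorbed into one of the blocks without affecting any ideal. Everything else is essentially bookkeeping combined with direct appeals to Lemmas \ref{lemma_ideal} and \ref{HNF_equiv}.
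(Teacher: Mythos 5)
Your proposal is correct and follows essentially the same route as the paper: both directions reduce the statement to Lemma \ref{lemma_ideal} (decomposability $\Leftrightarrow$ $\ker S\equiv\ker S_1\times\cdots\times\ker S_t$) and Lemma \ref{HNF_equiv} (HNF-diagonalizability $\Leftrightarrow$ block product structure of $\rF(L)$), building the block-diagonal matrix from HNF bases of the $\ker S_i$ in one direction and reading off the $A_i$ from the blocks $D_i$ in the other. If anything, you are more explicit than the paper on the points it leaves implicit, namely the verification $\ker S_i=\rF(D_i)$, the role of condition 3 of Definition \ref{HNF-diagonal} in matching irreducibility, and the treatment of zero columns.
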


\begin{proof}
Suppose that  $S$ is a decomposable semigroup. By Lemma \ref{lemma_ideal},
$\ker S\equiv  \ker S_1 \times \cdots \times \ker S_t.$
Consider $L_i$ a base of each $\ker S_i,$ where $L_i$ is in HNF. Let  $L'$ be the matrix
$$L'=\left(\begin{array}{ccccccccc} L _1 & \Theta & \Theta & \cdots & \Theta
\\ \Theta & L_2 & \Theta & \cdots & \Theta \\
\Theta & \Theta & L_3 & \cdots & \Theta \\ \vdots & \vdots & \vdots & \ddots & \vdots
\\  \Theta & \Theta & \Theta &  \cdots &   L _t \\ \Theta & \Theta & \Theta &  \cdots &   \Theta
\end{array}\right)\in \Z ^{m\times n}.$$
Since $L$ and $L'$ span equivalent lattices, we have that there exists a column permutation
matrix $Q$ such that $\rF(LQ)= \rF(L')$ and $\mbox{HNF}(LQ)=\mbox{HNF}(L')$
(by Lemma \ref{HNF_equiv}). It is easy to see that $\mbox{HNF}(L')$
is an HNF-diagonal matrix, therefore  $L$ is HNF-diagonalizable.

Conversely, suppose that  $L$ is HNF-diagonalizable. In this
case, if  $D$ is the  HNF-diagonal matrix associated to  $LQ,$
$\rF(LQ)=\rF(D)= \rF(D_1) \times \cdots \times \rF(D_t),$ where  $D_i$ represents the blocks of the matrix  $D.$ We conclude that  $S$ is decomposable and the subsemigroups $S_i$ of
its irreducible decomposition are the semigroups generated by the
column of  $A$ indicated by the elements of the corresponding blocks  $D_i$.
In this case,
$I_S = I_{\rF(LQ)}= I_{\rF(D_1)} + \cdots + I_{\rF(D_t)}$ with $I_{S_i}=I_{\rF(D_i)}.$
\end{proof}

As described in the proof, given an HNF-diagonalization of
$L$ the irreducible decomposition $S=\langle A_1\rangle\oplus \cdots \oplus \langle A_t\rangle$ is obtained
where for every $j$, $A_j$ is the set formed
by the elements of $A$ corresponding to the blocks $D_j$ of the associated
HNF-diagonal matrix.

\begin{remark}\label{nota_libre_torsion}
If the semigroup $S$ is torsion free we can study the decomposition of the semigroup $S$ by applying Algorithm \ref{alg_matrix_fresi} to the matrix whose columns are formed by a system of generators of $S$ and in this case the decomposition can be obtained operating directly in $S$. The examples of Section \ref{sec_varieties}, Section \ref{ejemplo} and Table \ref{tabla} have been done applying directly the decomposition over their corresponding semigroups.
\end{remark}

\begin{remark}\label{nota_aspecto_agradable}
Algorithm \ref{alg_matrix_fresi} gives a method to obtain the minimal decomposition of a semigroup $S$ (in case this decomposition exists), but also it can be seen as an algorithm to obtain a simple system generators of a semigroup isomorphic to $S.$ For any given torsion free semigroup, a simple system of generators can be obtained by using Remark \ref{nota_libre_torsion}. If the semigroup is not torsion free, that system of generators can be obtain from the lattice $\ker S$ by applying \cite[Chapter 2]{Rosales3}.
\end{remark}

\section{Decomposable varieties}\label{sec_varieties}

It is well known (see \cite[Chapter 4]{Sturmfels95}) that every torsion free semigroup $S=\langle a_1,\ldots ,a_n\rangle\subset \Z ^m$ has an affine variety $V(I_S)$ determined by the set of zeros of its ideal $I_S\subset \k[X_1,\ldots ,X_n].$ This variety is parametrized by the equations $x_i=l^{a_i},$ with $i=1,\ldots,n$ and $l\in (\k^*)^m$.

If one consider the decomposable semigroup (\ref{def_semig_descomp}), $V(I_S)=V(I_{S_1})\cap \cdots \cap V(I_{S_t}),$ where all the varieties $V(I_{S_i})\subset \k ^{\mbox{{\tiny card}} (A_i)}$ are embedded in $\k^n$. We say that a variety is a decomposable variety if it is obtained from a decomposable semigroup.

Clearly the above parametrization depends on the generators of the semigroup $S$, althought the variety $V(I_S)$ is determined by the ideal $I_S$ (the variety depends on the relations of the generators of $S$). Thus, systems of generators of isomorphic semigroups represent the same variety.

From Remark \ref{nota_aspecto_agradable} we obtain not only a decomposition when such decomposition exists, we can determine a simple reparametrization.

\begin{example}
Consider the toric variety determined by the parametrization
$$\left\{\begin{array}{ccllllll}
x_1 & = & l_1^{-1} & l_2^{-4} & l_3^{7} & l_4^{7} & l_5^{-1}\\
x_2 & = & l_1^{-4} & l_2^{-4} & l_3^{8} & l_4^{16} & l_5^{-4}\\
x_3 & = & l_1^{12} & l_2^{-9} & l_3^{12} & l_4^{6} & l_5^{-3}\\
x_4 & = & l_1^{4} & l_2^{-3} & l_3^{4} & l_4^{2} & l_5^{-1}\\
x_5 & = & l_1^{-3} &  & l_3 & l_4^{9} & l_5^{-3}\\
x_6 & = & l_1^{-2} & l_2^{-8} & l_3^{14} & l_4^{14} & l_5^{-2}\\
x_7 & = & l_1^{8} & l_2^{-6} & l_3^{8} & l_4^{4} & l_5^{-2}\\
\end{array}\right.,$$
and let $S\subset \Z^5$ the semigroup associated to the parametrization generated by the columns of the matrix
$$A=\left( \begin {array}{ccccccc} -1&-4&12&4&-3&-2&8\\\noalign{\medskip}
-4&-4&-9&-3&0&-8&-6\\\noalign{\medskip}7&8&12&4&1&14&8
\\\noalign{\medskip}7&16&6&2&9&14&4\\\noalign{\medskip}-1&-4&-3&-1&-3&
-2&-2\end {array} \right).$$

Now by applying Algorithm \ref{alg_matrix_fresi} and Remark \ref{nota_libre_torsion}, the semigroup $S$ is the direct sum of
$$\left\langle \left( \begin {array}{cccc} -1&-4&-3&-2\\\noalign{\medskip}-4&-4&0&-8
\\\noalign{\medskip}7&8&1&14\\\noalign{\medskip}7&16&9&14
\\\noalign{\medskip}-1&-4&-3&-2\end {array} \right)\right\rangle \bigoplus \left\langle\left( \begin {array}{ccc} 12&4&8\\\noalign{\medskip}-9&-3&-6
\\\noalign{\medskip}12&4&8\\\noalign{\medskip}6&2&4
\\\noalign{\medskip}-3&-1&-2\end {array} \right)\right\rangle,
$$
and it is isomorphic (it has the same ideal up to permutation of the variables) to the direct sum of the semigroups $S_1$ and $S_2$,
$$\left\langle \left( \begin {array}{cccc} 1&0&-1&2\\0&4&
4&0\\0&0&0&0\\0&0&0&0\\0&0&0&0\end {array} \right)\right\rangle \bigoplus \left\langle \left( \begin {array}{ccc} 0&0&0\\ 0&0&0\\ 3&1&2\\ 0&0&0\\0&0&0\end {array} \right)\right\rangle.$$

Using this decomposition a simple reparametrization of the affine toric variety $V$ is
$$\left\{\begin{array}{ccllllll}
x_1 & = & q_1\\
x_2 & = &  & q_2^{4}\\
x_3 & = &  &  & q_3^{3}\\
x_4 & = &  & & q_3 \\
x_5 & = & q_1^{-1} & q_2^{4}\\
x_6 & = & q_1^{2}\\
x_7 & = &  & & q_3^{2} \\
\end{array}\right.$$
Furthermore the ideal $I_S$ is the direct sum of the ideals $I_{S_1}\subset \C[x_1,x_2,x_5,x_6]$ and
$I_{S_2}\subset \C[x_3,x_4,x_7],$
$I_S=\left\langle
x_1^2-x_6,\,
x_1x_5-x_2
\right\rangle+ \left\langle x_4^2-x_7,\,  x_4^3-x_3\right\rangle$ and
$V=V\left(\left\langle
x_1^2-x_6,\,
x_1x_5-x_2 \right\rangle\right) \cap V\left(\left\langle x_4^2-x_7,\,  x_4^3-x_3 \right\rangle\right).$
\end{example}

\section{Some applications of the decomposition}\label{ideal_descomponible}

From (\ref{def_semig_descomp}), it is possible to characterize some properties of the
ideal $I_S$ using the decomposition $I_S= I_{S_1}+ \cdots
+ I_{S_t}.$ From now
on, we consider the notation fixed in
(\ref{def_semig_descomp}).

\begin{proposition}\label{main_corollary} Let
 $I_S = I_{S_1}+ \cdots + I_{S_t}$ be the decomposition of $I_S$.
 Then $I_S$ is generated by the disjoint union  of the
 generators of each  $I_{S_i}.$
\end{proposition}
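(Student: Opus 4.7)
The plan is to show (i) that a union of generating sets of the $I_{S_i}$ always generates $I_S$ and (ii) that this union is automatically disjoint, thanks to the fact that the $A_i$ are pairwise disjoint subsets of $A$.

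First I would recall from the decomposition $S=\langle A_1\rangle\oplus\cdots\oplus\langle A_t\rangle$ in (\ref{def_semig_descomp}) that the sets $A_i\subset A$ are pairwise disjoint, so the polynomial subrings $\k[A_i]$ and $\k[A_j]$ share no indeterminates for $i\neq j$. Since $I_{S_i}\subset \k[A_i]$ (by the definition of the ideal of a semigroup applied to $\langle A_i\rangle$), every binomial in a binomial generating set $G_i$ of $I_{S_i}$ has its support contained in $A_i$.

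Next I would argue the disjointness of $G=G_1\cup\cdots\cup G_t$ as sets of binomials. If $b\in G_i\cap G_j$ with $i\neq j$, then $b=X^\alpha-X^\beta$ with $\supp(b)\subset A_i\cap A_j=\emptyset$, which forces $b=0$; this contradicts the fact that $G_i$ and $G_j$ are sets of (nonzero) binomial generators of proper ideals. Hence the union is disjoint.

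Finally I would invoke the hypothesis $I_S=I_{S_1}+I_{S_2}+\cdots+I_{S_t}$ (which is guaranteed by Lemma \ref{lemma_ideal} applied to the irreducible decomposition of $S$, or simply by the definition (\ref{def_ideal_descomp})): by the very definition of a sum of ideals, a set that generates each $I_{S_i}$ has as its union a generating set of $I_{S_1}+\cdots+I_{S_t}=I_S$. Combining this with the disjointness argument yields the claim.

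The main obstacle is essentially bookkeeping: all the real algebraic content is concentrated in Lemma \ref{lemma_ideal}, and here we only need to verify that the supports of the binomials in $G_i$ and $G_j$ cannot overlap when $i\neq j$. No further calculation is required.
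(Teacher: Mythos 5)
Your proposal is correct and follows essentially the same route as the paper, which dismisses the statement as trivial precisely because the generating sets of the $I_{S_i}$ involve pairwise disjoint sets of indeterminates: generation of the sum is immediate from the definition of a sum of ideals, and disjointness follows from the disjoint supports. Your write-up merely spells out this one-line argument in more detail, which is fine.
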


\begin{proof} Trivial, using that the generator system of each
 $I_{S_i}$ do not have  common variables.
\end{proof}

\begin{remark}\label{markov_descomp}The above result is also true
for: Markov bases, universal Markov bases, Gr\"{o}bner bases, Graver bases and universal
Gr\"{o}bner bases, and  $Betti(S)=\bigcup _{i=1}^t Betti(S_i).$
\end{remark}

Under the assumption $S\cap (-S)=\{0\},$
the following result allows to
study the uniqueness of Markov basis of $I_S$ using the Markov bases
of the ideals  $I_{S_i}.$

\begin{corollary}Let
 $I_S= I_{S_1}+ \cdots + I_{S_t}$   be the decomposition of $I_S$.
 Then
 $I_S$ is generated by indispensable binomials
 ($I_S$ has  a unique Markov basis up to scalar multiple of its elements) if and only if
 $I_{S_i}$ is generated by indispensable binomials, for all  $i=1,\ldots , t.$
\end{corollary}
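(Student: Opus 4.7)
The plan is to reduce everything to the disjoint-union decomposition of Markov bases stated in Remark \ref{markov_descomp}. The crucial strengthening I need, beyond ``union of Markov bases is a Markov basis'', is the converse: \emph{every} Markov basis $M$ of $I_S$ can be written as a disjoint union $M=M_1\sqcup\cdots\sqcup M_t$, where $M_i$ is a Markov basis of $I_{S_i}$. This is really a consequence of the reduction performed in the proof of Lemma \ref{lemma_ideal}: if an $S$-homogeneous binomial $X^\alpha-X^\beta$ has $\alpha-\beta\equiv(u_1,\dots,u_t)$ with two or more $u_i$ nonzero, then it can be rewritten as a $\k[X_1,\ldots,X_n]$-linear combination of binomials $X_{A_i}^{u_i^+}-X_{A_i}^{u_i^-}$ of strictly smaller support, hence it cannot belong to any minimal generating set. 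Equivalently, since $Betti(S)=\bigsqcup_i Betti(S_i)$ and each element of $Betti(S_i)$ lies in $S_i$ with zero components in the other summands, any binomial in a minimal system of generators of degree $m\in Betti(S_i)$ must have its support contained in the variables corresponding to $A_i$.

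For $(2)\Rightarrow(1)$, assume each $I_{S_i}$ is generated by its (unique up to scalars) indispensable Markov basis $M_i$. Then $M:=M_1\sqcup\cdots\sqcup M_t$ is a Markov basis of $I_S$ by Remark \ref{markov_descomp}. If $M'$ is any other Markov basis of $I_S$, the above observation gives a decomposition $M'=M'_1\sqcup\cdots\sqcup M'_t$ where each $M'_i$ is a Markov basis of $I_{S_i}$; uniqueness (up to scalars) of each $M_i$ forces $M'_i=M_i$ up to scalars, so $M'=M$ up to scalars and $I_S$ is generated by indispensable binomials.

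For $(1)\Rightarrow(2)$, argue by contrapositive. Suppose some $I_{S_j}$ is not generated by indispensable binomials, so there exist two non-proportional Markov bases $M_j\neq M'_j$ of $I_{S_j}$. Picking any Markov bases $M_i$ of $I_{S_i}$ for $i\neq j$, the two sets $M_1\sqcup\cdots\sqcup M_j\sqcup\cdots\sqcup M_t$ and $M_1\sqcup\cdots\sqcup M'_j\sqcup\cdots\sqcup M_t$ are two non-proportional Markov bases of $I_S$, contradicting $(1)$.

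The only delicate point — and the one I would write out carefully — is the claim that every minimal generating set of $I_S$ splits along the decomposition $I_S=I_{S_1}+\cdots+I_{S_t}$. The rest is bookkeeping. I would phrase this step either via the $Betti(S)=\bigsqcup Betti(S_i)$ argument or by directly invoking the telescoping identity from the proof of Lemma \ref{lemma_ideal} to show that any ``mixed'' binomial is redundant modulo binomials of strictly smaller support.
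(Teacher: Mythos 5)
Your proposal is correct and follows essentially the same route as the paper, which simply declares the corollary trivial from Remark \ref{markov_descomp} (the splitting of Markov bases along the decomposition $I_S=I_{S_1}+\cdots+I_{S_t}$). The additional detail you supply --- that \emph{every} Markov basis of $I_S$ splits as a disjoint union of Markov bases of the $I_{S_i}$, justified via $Betti(S)=\bigcup_i Betti(S_i)$ and the observation that a binomial of $S$-degree in $Betti(S_i)$ must be supported on the variables of $A_i$ --- is precisely the content the paper's remark leaves implicit, so there is no genuine divergence.
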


We show how to determine if the ideal $I_S$
is a complete intersection by using the ideals $I_{S_i}.$

\begin{corollary}Let
 $I_S= I_{S_1}+ \cdots + I_{S_t}$ be the decomposition of $I_S.$ $I_S$ is  a complete intersection if and only if $I_{S_i}$ is a complete intersection for every $i=1,\ldots ,t.$
\end{corollary}

Assuming again $S\cap (-S)=\{0\},$ the following result establishes that the {\em gluing} property
can be characterized by the decomposition of the semigroup.

\begin{proposition}
$S$ is a gluing if and only if there exists $i\in\{1,\ldots, t\}$ such that  $S_i$ is a gluing of semigroups.
\end{proposition}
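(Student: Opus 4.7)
The plan is to reduce both directions to the equivalent group-theoretic characterization of gluing recalled in Section \ref{preliminares}: $S$ minimally generated by $E_1\sqcup E_2$ is a gluing of $\langle E_1\rangle$ and $\langle E_2\rangle$ if and only if there exists $d\in\langle E_1\rangle\cap\langle E_2\rangle\setminus\{0\}$ with $G(\langle E_1\rangle)\cap G(\langle E_2\rangle)=G(\{d\})$. The key structural input is that since the decomposition $S=\bigoplus_j S_j$ places the $A_j$'s in pairwise disjoint coordinate blocks of $G(S)=\bigoplus_j G(S_j)$, for any subset $E\subset A$ one has $G(\langle E\rangle)=\bigoplus_j G(\langle E\cap A_j\rangle)$.

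For the ``if'' direction I would assume some $S_i$ is a gluing of $\langle C_1\rangle$ and $\langle C_2\rangle$ at degree $d$, set $D_1=C_1\cup\bigcup_{j\neq i}A_j$ and $D_2=C_2$, and compute that $G(\langle D_1\rangle)\cap G(\langle D_2\rangle)$ equals $\bigl(G(\langle C_1\rangle)\oplus\bigoplus_{j\neq i}G(S_j)\bigr)\cap G(\langle C_2\rangle)$. Since $G(\langle C_2\rangle)\subset G(S_i)$ and $G(S_i)\cap\bigoplus_{j\neq i}G(S_j)=0$, this collapses to $G(\langle C_1\rangle)\cap G(\langle C_2\rangle)=G(\{d\})$, exhibiting $S$ as a gluing at $d$ with respect to $D_1\sqcup D_2$.

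For the ``only if'' direction I would start from $S$ being a gluing of $\langle D_1\rangle$ and $\langle D_2\rangle$ at some $d\in S\setminus\{0\}$, decompose $d=d_1+\cdots+d_t$ along $S=\bigoplus_j S_j$, and set $D_1^j=D_1\cap A_j$, $D_2^j=D_2\cap A_j$, $H_j=G(\langle D_1^j\rangle)\cap G(\langle D_2^j\rangle)$. Then $\bigoplus_j H_j=G(\langle D_1\rangle)\cap G(\langle D_2\rangle)=\Z d$, so testing elements of the form $(0,\ldots,h_j,\ldots,0)$ against $\Z d$ forces $H_j\subset\Z d_j$, while $d_j\in H_j$ gives the reverse containment; hence $H_j=G(\{d_j\})$ for every $j$. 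Picking any $i$ with $d_i\neq 0$ (which exists because $d\neq 0$), both $D_1^i$ and $D_2^i$ are nonempty since each $\Z$-span must contain $d_i$, and the identity $G(\langle D_1^i\rangle)\cap G(\langle D_2^i\rangle)=G(\{d_i\})$ exhibits $S_i$ as a gluing at $d_i$.

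The main obstacle is ensuring that $A_i=D_1^i\sqcup D_2^i$ really is a partition of a minimal generating set of $S_i$, which needs the other $A_j$'s ($j\neq i$) to sit entirely on one side of the original partition of $A$. To handle this, for every $j$ with $d_j=0$ I would rerun the ``$\ker$ splits'' computation from the converse part of the proof of Lemma \ref{lemma_ideal}: for any $\alpha\in\ker S_j$ split as $(\alpha_1,\alpha_2)$ along $D_1^j$ and $D_2^j$, the partial sums over $D_1^j$ and over $D_2^j$ both lie in $H_j=0$ and hence vanish, so $\ker S_j\equiv\ker\langle D_1^j\rangle\times\ker\langle D_2^j\rangle$; Lemma \ref{lemma_ideal} then makes $S_j$ decomposable along $D_1^j$ and $D_2^j$, and irreducibility of $S_j$ forces one of them to be empty, as required.
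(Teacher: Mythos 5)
Your proof is correct and follows essentially the same route as the paper's: both directions rest on the group-theoretic characterization of gluing together with the fact that the decomposition splits $G(S)$ (and hence the gluing degree $d=d_1+\cdots+d_t$) along the blocks $A_j$, and your ``if'' direction is exactly the paper's construction gluing $\langle C_2\rangle$ with $\langle C_1\cup\bigcup_{j\neq i}A_j\rangle$. Your final paragraph is superfluous: once $d_i\neq 0$ gives nonempty $D_1^i,D_2^i$ and $G(\langle D_1^i\rangle)\cap G(\langle D_2^i\rangle)=G(\{d_i\})$ with $d_i\in\langle D_1^i\rangle\cap\langle D_2^i\rangle$, the gluing of $S_i$ is already established (minimality of $A_i$ is inherited directly from minimality of $A$), so nothing about the blocks with $d_j=0$ is needed.
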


\begin{proof}
Suppose that  $S$ is the gluing of  $S'$ and $S''.$ In this case,
there exists a  gluing binomial $X^\alpha -X^\beta \in I_S$ verifying
that its  $S$-degree  $d\in S$ is in  $S'\cap S''.$ Moreover, by
Lemma \ref{lemma_ideal},
$X^\alpha -X^\beta=X_{A_1}^{\alpha _1}X_{A_2}^{\alpha _2}\cdots
X_{A_t}^{\alpha _t} - X_{A_1}^{\beta _1}X_{A_2}^{\beta  _2}\cdots
X_{A_t}^{\beta_t},$ where $X_{A_i}^{\alpha _i} - X_{A_i}^{\beta
_i}\in I_{S_i}\subset I_S$ for  $i=1,\ldots ,t.$

If $d_i\in S_i$ is the $S$-degree of $X_{A_i}^{\alpha
_i},$ we have  $d=d_1+\cdots +d_t.$ Without loss of generality, suppose that  $d_1\neq 0.$

Clearly,  $d_1\in (S_1\cap S') \cap (S_1\cap S'')$ and  $d_1\in
G(S_1\cap S') \cap G(S_1\cap S'')\subset G(S') \cap G(S'')=d\Z .$ Since
 $S\cap (-S)= \{0\}$ and $d=d_1+\cdots +d_t,$ we have  $d_1=d.$
 Therefore,  $d_i=0$ for every  $i > 1,$ and  $d=d_1\in S_1.$ That is, the gluing
degree associated to the partition  $S=S'+S''$ belongs to a
semigroup in the decomposition of $S$.

As $G(S_1\cap S') \cap G(S_1\cap S'')=d_1\Z$ and $d_1\in
(S_1\cap S') \cap (S_1\cap S''),$ the semigroup $S_1$ is the gluing of $S_1\cap S'$ and $S_1\cap S''$.

Conversely, if any  $S_i$ is the gluing of  $S'_i$ and $S''_i$, then,
it is easy to prove that $S$ is the gluing of  $S'_i$ and  $S''_i\cup
\left(\bigcup _{j\neq i} S_j\right).$
\end{proof}

\section{Combinatorial results}\label{sec_caract_comb}

In this section, we consider that the semigroup $S$ satisfies $S\cap (-S)=\{0\}.$ If a Markov basis $\CaB$ of a semigroup ideal $I_S$ verifies that it
can be decomposed into two proper and disjoint subsets $\CaB_1$ and
$\CaB_2$ such that $\CaB=\CaB_1\sqcup \CaB_2$ and
$(\cup
_{f\in \CaB_1} \supp (f)) \cap (\cup _{f\in \CaB_2}
\supp (f)) =\emptyset,$
then the semigroup $S$ is
decomposable. A not necessarily irreducible decomposition is
determined by the generators of the semigroup given by the sets
$\bigcup _{f\in \CaB_1} \supp (f)$ and $\bigcup _{f\in \CaB_2} \supp
(f).$ The irreducible decomposition can be obtained applying this
idea to the subsets $\CaB_1$ and $\CaB_2.$ Thus the
decomposition of a semigroup is characterized by the decomposition
into subsets with disjoint support of a Markov basis of its
associated ideal. However, Markov bases can be characterized in a combinatorial way
through the study of some simplicial complexes, such as the complex
$\Delta _m$  (see \cite{BCMP}) or the complex $\nabla _m$ (see \cite{OjVi}), where
$\Delta_m = \{F \subseteq \{1,\ldots ,n\} |\, m - \sum_{i \in F} a_i
\in S \},$
$$\nabla_m = \{ F
\subseteq  \{\mbox{monomials in }\k[X_1,\ldots,X_n]\mbox{ of }S\mbox{-degree}\,\, m\}|\, \gcd(F) \neq 1 \}$$ and $\gcd(F)$ denotes the
greatest common divisor of the monomials in $F.$

A Markov basis of the semigroup ideal has an element
of $S$-degree $m\in S$ if, and only if, $\Delta_m$ (respectively
$\nabla_m$) are not connected. Furthermore, the elements of a given
degree, $m\in Betti(S),$ that appear in a Markov basis are fixed by the connected
components of such complexes and the binomials of this degree can be obtained from $\nabla_m.$ For every non-connected complex $\nabla_m$, the number of binomials obtained is equal to the number of connected components minus one (see \cite{BCMP} and \cite{OjVi} for
further details), therefore we obtain the following combinatorial characterization of decomposable semigroups.

\begin{theorem}\label{caracterizacion_combinatoria}
Let $S$ be a semigroup. The following statements are equivalent:
\begin{enumerate}
\item $S$ is decomposable.
\item There exist $C_1,C_2$ proper and disjoint subsets of $\{a_1,\ldots ,a_n\}$ fulfilling:
    \begin{itemize}
        \item $\{a_1,\ldots ,a_n\}= C_1\sqcup C_2.$
        \item For all $m \in S$ with $\nabla _m$ not connected, all the vertices of $\nabla _m$ belong to $\k[C_1]$ or $\k[C_2],$
        \item There exist $m_1,m_2\in S$ with $\nabla _{m_1}$ and $\nabla _{m_2}$ not connected such that the vertices of $\nabla _{m_1}$ belong to  $\k[C_1]$ and the vertices of $\nabla _{m_2}$ belong to $\k[C_2].$
    \end{itemize}
\item There exist $C_1,C_2$ proper and disjoint subsets  of $\{1,\ldots ,n\}$ satisfying:
    \begin{itemize}
        \item $\{1,\ldots ,n\}= C_1\sqcup C_2.$
        \item For all $m \in S$ with $\Delta _m$ not connected, all vertices of $\Delta _m$ belong to $C_1$ or $C_2,$
        \item There exist $m_1,m_2\in S$ with $\Delta _{m_1}$ and $\Delta _{m_2}$ not connected such that the vertices of $\Delta _{m_1}$ belong to  $C_1$ and the vertices of $\Delta _{m_2}$ belong to $C_2.$
    \end{itemize}
\end{enumerate}
\end{theorem}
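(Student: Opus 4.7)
The plan is to establish the cycle (1) $\Rightarrow$ (2) $\Leftrightarrow$ (3) $\Rightarrow$ (1). The equivalence of (2) and (3) will come from a direct dictionary between the two simplicial complexes, while the remaining implications exploit the observation, made just after (\ref{def_ideal_descomp}), that a partition of a Markov basis of $I_S$ into two pieces with disjoint supports is essentially the same data as a decomposition of $S$.

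For (1) $\Rightarrow$ (2), I would start from the irreducible decomposition $S=\langle A_1\rangle\oplus\cdots\oplus\langle A_t\rangle$ and group the summands into two nonempty blocks $C_1,C_2$; assuming $Betti(S)\neq\emptyset$ (otherwise $I_S=0$ and there is nothing to check), the grouping can be chosen so that each block meets some $A_i$ with $Betti(S_i)\neq\emptyset$. The key technical step is the claim: if $m\in Betti(S_i)$, then every monomial $X^\alpha$ of $S$-degree $m$ is supported inside $A_i$. To see this, split $\alpha$ along the direct-sum decomposition and observe that the $S_j$-component of $m$ vanishes for $j\neq i$; the hypothesis $S_j\cap(-S_j)=\{0\}$ then forces $\alpha$ to vanish on $A_j$. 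Together with the identity $Betti(S)=\bigsqcup_i Betti(S_i)$ from Remark~\ref{markov_descomp}, this immediately yields the second bullet of (2), and the third follows from the choice of grouping.

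For (3) $\Rightarrow$ (1), I would fix a Markov basis $\CaB$ of $I_S$ and show that it splits along $C_1,C_2$. Every binomial in $\CaB$ has some $S$-degree $m\in Betti(S)$, so $\Delta_m$ is disconnected; the hypothesis then forces all its vertices to lie in a single $C_j$, and since $\Delta_m$ is closed under subsets, the supports of the two monomials forming the binomial (each a face of $\Delta_m$) are contained in $C_j$. Thus $\CaB=\CaB_1\sqcup\CaB_2$ with $\CaB_i\subset\k[C_i]$ and both pieces nonempty by the third bullet, which by the criterion recalled in the Introduction produces a (possibly reducible) decomposition $S=\langle C_1\rangle\oplus\langle C_2\rangle$, hence decomposability.

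Finally, (2) $\Leftrightarrow$ (3) rests on the combinatorial observation that $F\subseteq\{1,\ldots,n\}$ is a face of $\Delta_m$ iff $m-\sum_{i\in F}a_i\in S$, which in turn holds iff $F\subseteq\supp(X^\alpha)$ for some vertex $X^\alpha$ of $\nabla_m$ (for the nontrivial direction, write $m-\sum_{i\in F}a_i=\sum_j\beta_j a_j$ and take $X^{1_F+\beta}$). Hence $\Delta_m$ is the downward closure of the vertex-supports of $\nabla_m$, and by the results cited from \cite{BCMP} and \cite{OjVi} the two complexes are simultaneously (dis)connected; under this dictionary the bullets of (2) and (3) translate verbatim. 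The main obstacle in the whole argument is the no-mixing lemma inside (1) $\Rightarrow$ (2), since that is the only place where $S\cap(-S)=\{0\}$ is actively used; everything else is routine bookkeeping around Markov bases and the two simplicial complexes.
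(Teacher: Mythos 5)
Your proposal follows essentially the same architecture as the paper's proof: split along the $\Delta$/$\nabla$ dictionary for (2)$\Leftrightarrow$(3), use the no-mixing of monomials in Betti degrees for (1)$\Rightarrow$(2), and split a Markov basis along $C_1,C_2$ for the converse. The small genuine differences are to your credit: your no-mixing lemma is run through the uniqueness of the decomposition $m=m_1+\cdots+m_t$ in the direct sum plus positivity ($S\cap(-S)=\{0\}$), whereas the paper routes the same fact through $\ker S\equiv\ker S_1\times\cdots\times\ker S_t$ (Lemma \ref{lemma_ideal}) and a mixed binomial $X_{C_1}^\alpha X_{C_2}^\beta-X_{C_1}^\gamma$; and you actually prove the dictionary ($F\in\Delta_m$ iff $F\subseteq\supp X^\alpha$ for some vertex $X^\alpha$ of $\nabla_m$) that the paper dispatches in one sentence. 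Both you and the paper implicitly assume the nondegenerate situation in which at least two irreducible summands have nonempty Betti set (your phrase ``the grouping can be chosen so that each block meets some $A_i$ with $Betti(S_i)\neq\emptyset$'' is exactly that assumption), so I do not count that against you.

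The one place where your argument is thinner than the paper's is the end of (3)$\Rightarrow$(1). Having split the Markov basis as $\CaB=\CaB_1\sqcup\CaB_2$ with $\CaB_i\subset\k[C_i]$ and both parts nonempty, you invoke ``the criterion recalled in the Introduction'' to conclude $S=\langle C_1\rangle\oplus\langle C_2\rangle$. That criterion is only asserted in the introduction, and its nontrivial half --- that the sum $\langle C_1\rangle+\langle C_2\rangle$ is \emph{direct} --- is proved in the paper precisely inside this theorem's proof, so citing it leaves the key step of this implication unargued. The ideal equality is indeed immediate (the sandwich $I_S=\langle\CaB_1\cup\CaB_2\rangle\subseteq I_{\langle C_1\rangle}+I_{\langle C_2\rangle}\subseteq I_S$), but directness needs the paper's extra argument: a Gr\"{o}bner basis of $I_S$ can be taken as the union of Gr\"{o}bner bases of $I_{\langle C_1\rangle}$ and $I_{\langle C_2\rangle}$ (leading terms from different blocks are coprime), and if $m'\in\langle C_1\rangle\cap\langle C_2\rangle\setminus\{0\}$ then the binomial $X_{C_1}^\alpha-X_{C_2}^\beta\in I_S$ of $S$-degree $m'$ would have nonzero normal form, a contradiction. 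Include this (or an equivalent) argument and your proof is complete and self-contained.
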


\begin{proof}
Since the facets of $\Delta_m$ are the union of the supports of the monomials of the facets
of $\nabla_m$, {\em 2} and {\em 3}
are equivalent.

Now let us prove {\em 1} implies {\em 2}. Assume that $S$ is
decomposable. In such case, a Markov basis of $I_S$ is the union of
the Markov bases of the ideals $I_{S_i}$. With the notation fixed in
(\ref{def_semig_descomp}), consider $C_1=A_1$ and $C_2=\bigsqcup
_{i=2}^t A_i.$ Let $m$ be the $S$-degree of any binomial of a
Markov basis of any $I_{S_i}$ (without lost of generality we
consider $i=1$). In this case $\nabla_m$ is non-connected and it has
at least two different connected components, each of them with a
monomial in $\k[C_1]$.

If there exists $X_{C_1}^\alpha X_{C_2}^\beta\in \k [C_1,C_2]$ in $\nabla_m$, then there exists $X_{C_1}^\alpha X_{C_2}^\beta-X_{C_1}^\gamma\in I_S$ with $S$-degree $m$. Therefore the element $(\alpha - \gamma,\beta)$ belong to $\ker S_1 \times \cdots \times \ker S_t$. Hence $\beta\in \ker S_2 \times \cdots \times \ker S_t$, that is not possible since $S\cap (-S)=\{0\}.$ This concludes {\em 1} implies {\em 2}.

For {\em 2} implies {\em 1}, consider the sets $C_1$
and $C_2$ given in the statement. In this case, since the Markov
bases of $I_S$ are calculated from the connected components of the
complexes $\nabla_m$ and such complexes are contained in $\k[C_1]$
or $\k[C_2]$, these bases are formed exclusively by binomials in $\k
[C_1]$ and $\k[C_2]$. By hypotheses, these bases have elements of
both types.

Clearly $I_S=I_{\langle C_1 \rangle} + I_{\langle C_2 \rangle}$ and a Gr\"{o}bner basis of $I_S$ is formed by the union of one Gr\"{o}bner basis of $I_{\langle C_1 \rangle}$ and another  of $I_{\langle C_2 \rangle}.$

It is straightforward to prove that $S= \langle C_1 \rangle +
\langle C_2 \rangle.$ It only remains to prove that this sum is a
direct sum. If not, there would exist $m'\in \langle C_1 \rangle
\cap \langle C_2 \rangle$ and a binomial $X_{C_1}^\alpha -
X_{C_2}^\beta \in I_S$ with $S$-degree $m'$. Since the normal form
of this binomial with respect to a Gr\"{o}bner basis of $I_S$ is not
equal to zero, this is not possible.
\end{proof}

\section{Decomposable semigroups in some Statistical Models}\label{ejemplo}

One of the most important objects in Algebraic Statistics (see \cite{Diaconis}, \cite{Drton}, \cite{Gibilisco} and \cite{Pistone} about Algebraic Statistics) is the ideal associated to semigroups related with statistical models. Among the properties of this ideal, the uniqueness of its Markov bases is also a very studied problem  in this research area (see \cite{Aoki} and \cite{Takemura}). In this context, we  use our algorithm to check how many statistical models of the database \cite{DataBase} have associated a decomposable semigroup, and we also study the uniqueness of their Markov bases (see Table \ref{tabla}).

In order to illustrate the results of this work, we study the statistical model {\em "The binary graphical model of the bipyramid graph"}
(example {\verb"BPg_bin"} in \cite{DataBase}). The semigroup associated to this model is determined by the columns of the matrix
{\tiny $$A=\left(
\begin{array}{@{\hspace{3pt}}c@{\hspace{3pt}}c@{\hspace{3pt}}c@{\hspace{3pt}}c@{\hspace{3pt}}c@{\hspace{3pt}}c@{\hspace{3pt}}c@{\hspace{3pt}}c@{\hspace{3pt}}c@{\hspace{3pt}}c@{\hspace{3pt}}c@{\hspace{3pt}}c@{\hspace{3pt}}c@{\hspace{3pt}}c@{\hspace{3pt}}c@{\hspace{3pt}}c@{\hspace{3pt}}c@{\hspace{3pt}}c@{\hspace{3pt}}c@{\hspace{3pt}}c@{\hspace{3pt}}c@{\hspace{3pt}}c@{\hspace{3pt}}c@{\hspace{3pt}}c@{\hspace{3pt}}c@{\hspace{3pt}}c@{\hspace{3pt}}c@{\hspace{3pt}}c@{\hspace{3pt}}c@{\hspace{3pt}}c@{\hspace{3pt}}c@{\hspace{3pt}}c@{\hspace{3pt}}}
 1 & 0 & 0 & 0 & 0 & 0 & 0 & 0 & 0 & 0 & 0 & 0 & 0 & 0 & 0 & 0 & 1 & 0 & 0 & 0 & 0 & 0 & 0 & 0 & 0 & 0 & 0 & 0 & 0 & 0 & 0 & 0 \\
 0 & 1 & 0 & 0 & 0 & 0 & 0 & 0 & 0 & 0 & 0 & 0 & 0 & 0 & 0 & 0 & 0 & 1 & 0 & 0 & 0 & 0 & 0 & 0 & 0 & 0 & 0 & 0 & 0 & 0 & 0 & 0 \\
 0 & 0 & 1 & 0 & 0 & 0 & 0 & 0 & 0 & 0 & 0 & 0 & 0 & 0 & 0 & 0 & 0 & 0 & 1 & 0 & 0 & 0 & 0 & 0 & 0 & 0 & 0 & 0 & 0 & 0 & 0 & 0 \\
 0 & 0 & 0 & 1 & 0 & 0 & 0 & 0 & 0 & 0 & 0 & 0 & 0 & 0 & 0 & 0 & 0 & 0 & 0 & 1 & 0 & 0 & 0 & 0 & 0 & 0 & 0 & 0 & 0 & 0 & 0 & 0 \\
 0 & 0 & 0 & 0 & 1 & 0 & 0 & 0 & 0 & 0 & 0 & 0 & 0 & 0 & 0 & 0 & 0 & 0 & 0 & 0 & 1 & 0 & 0 & 0 & 0 & 0 & 0 & 0 & 0 & 0 & 0 & 0 \\
 0 & 0 & 0 & 0 & 0 & 1 & 0 & 0 & 0 & 0 & 0 & 0 & 0 & 0 & 0 & 0 & 0 & 0 & 0 & 0 & 0 & 1 & 0 & 0 & 0 & 0 & 0 & 0 & 0 & 0 & 0 & 0 \\
 0 & 0 & 0 & 0 & 0 & 0 & 1 & 0 & 0 & 0 & 0 & 0 & 0 & 0 & 0 & 0 & 0 & 0 & 0 & 0 & 0 & 0 & 1 & 0 & 0 & 0 & 0 & 0 & 0 & 0 & 0 & 0 \\
 0 & 0 & 0 & 0 & 0 & 0 & 0 & 1 & 0 & 0 & 0 & 0 & 0 & 0 & 0 & 0 & 0 & 0 & 0 & 0 & 0 & 0 & 0 & 1 & 0 & 0 & 0 & 0 & 0 & 0 & 0 & 0 \\
 0 & 0 & 0 & 0 & 0 & 0 & 0 & 0 & 1 & 0 & 0 & 0 & 0 & 0 & 0 & 0 & 0 & 0 & 0 & 0 & 0 & 0 & 0 & 0 & 1 & 0 & 0 & 0 & 0 & 0 & 0 & 0 \\
 0 & 0 & 0 & 0 & 0 & 0 & 0 & 0 & 0 & 1 & 0 & 0 & 0 & 0 & 0 & 0 & 0 & 0 & 0 & 0 & 0 & 0 & 0 & 0 & 0 & 1 & 0 & 0 & 0 & 0 & 0 & 0 \\
 0 & 0 & 0 & 0 & 0 & 0 & 0 & 0 & 0 & 0 & 1 & 0 & 0 & 0 & 0 & 0 & 0 & 0 & 0 & 0 & 0 & 0 & 0 & 0 & 0 & 0 & 1 & 0 & 0 & 0 & 0 & 0 \\
 0 & 0 & 0 & 0 & 0 & 0 & 0 & 0 & 0 & 0 & 0 & 1 & 0 & 0 & 0 & 0 & 0 & 0 & 0 & 0 & 0 & 0 & 0 & 0 & 0 & 0 & 0 & 1 & 0 & 0 & 0 & 0 \\
 0 & 0 & 0 & 0 & 0 & 0 & 0 & 0 & 0 & 0 & 0 & 0 & 1 & 0 & 0 & 0 & 0 & 0 & 0 & 0 & 0 & 0 & 0 & 0 & 0 & 0 & 0 & 0 & 1 & 0 & 0 & 0 \\
 0 & 0 & 0 & 0 & 0 & 0 & 0 & 0 & 0 & 0 & 0 & 0 & 0 & 1 & 0 & 0 & 0 & 0 & 0 & 0 & 0 & 0 & 0 & 0 & 0 & 0 & 0 & 0 & 0 & 1 & 0 & 0 \\
 0 & 0 & 0 & 0 & 0 & 0 & 0 & 0 & 0 & 0 & 0 & 0 & 0 & 0 & 1 & 0 & 0 & 0 & 0 & 0 & 0 & 0 & 0 & 0 & 0 & 0 & 0 & 0 & 0 & 0 & 1 & 0 \\
 0 & 0 & 0 & 0 & 0 & 0 & 0 & 0 & 0 & 0 & 0 & 0 & 0 & 0 & 0 & 1 & 0 & 0 & 0 & 0 & 0 & 0 & 0 & 0 & 0 & 0 & 0 & 0 & 0 & 0 & 0 & 1 \\
 1 & 0 & 0 & 0 & 0 & 0 & 0 & 0 & 1 & 0 & 0 & 0 & 0 & 0 & 0 & 0 & 0 & 0 & 0 & 0 & 0 & 0 & 0 & 0 & 0 & 0 & 0 & 0 & 0 & 0 & 0 & 0 \\
 0 & 1 & 0 & 0 & 0 & 0 & 0 & 0 & 0 & 1 & 0 & 0 & 0 & 0 & 0 & 0 & 0 & 0 & 0 & 0 & 0 & 0 & 0 & 0 & 0 & 0 & 0 & 0 & 0 & 0 & 0 & 0 \\
 0 & 0 & 1 & 0 & 0 & 0 & 0 & 0 & 0 & 0 & 1 & 0 & 0 & 0 & 0 & 0 & 0 & 0 & 0 & 0 & 0 & 0 & 0 & 0 & 0 & 0 & 0 & 0 & 0 & 0 & 0 & 0 \\
 0 & 0 & 0 & 1 & 0 & 0 & 0 & 0 & 0 & 0 & 0 & 1 & 0 & 0 & 0 & 0 & 0 & 0 & 0 & 0 & 0 & 0 & 0 & 0 & 0 & 0 & 0 & 0 & 0 & 0 & 0 & 0 \\
 0 & 0 & 0 & 0 & 1 & 0 & 0 & 0 & 0 & 0 & 0 & 0 & 1 & 0 & 0 & 0 & 0 & 0 & 0 & 0 & 0 & 0 & 0 & 0 & 0 & 0 & 0 & 0 & 0 & 0 & 0 & 0 \\
 0 & 0 & 0 & 0 & 0 & 1 & 0 & 0 & 0 & 0 & 0 & 0 & 0 & 1 & 0 & 0 & 0 & 0 & 0 & 0 & 0 & 0 & 0 & 0 & 0 & 0 & 0 & 0 & 0 & 0 & 0 & 0 \\
 0 & 0 & 0 & 0 & 0 & 0 & 1 & 0 & 0 & 0 & 0 & 0 & 0 & 0 & 1 & 0 & 0 & 0 & 0 & 0 & 0 & 0 & 0 & 0 & 0 & 0 & 0 & 0 & 0 & 0 & 0 & 0 \\
 0 & 0 & 0 & 0 & 0 & 0 & 0 & 1 & 0 & 0 & 0 & 0 & 0 & 0 & 0 & 1 & 0 & 0 & 0 & 0 & 0 & 0 & 0 & 0 & 0 & 0 & 0 & 0 & 0 & 0 & 0 & 0 \\
 0 & 0 & 0 & 0 & 0 & 0 & 0 & 0 & 0 & 0 & 0 & 0 & 0 & 0 & 0 & 0 & 1 & 0 & 0 & 0 & 0 & 0 & 0 & 0 & 1 & 0 & 0 & 0 & 0 & 0 & 0 & 0 \\
 0 & 0 & 0 & 0 & 0 & 0 & 0 & 0 & 0 & 0 & 0 & 0 & 0 & 0 & 0 & 0 & 0 & 1 & 0 & 0 & 0 & 0 & 0 & 0 & 0 & 1 & 0 & 0 & 0 & 0 & 0 & 0 \\
 0 & 0 & 0 & 0 & 0 & 0 & 0 & 0 & 0 & 0 & 0 & 0 & 0 & 0 & 0 & 0 & 0 & 0 & 1 & 0 & 0 & 0 & 0 & 0 & 0 & 0 & 1 & 0 & 0 & 0 & 0 & 0 \\
 0 & 0 & 0 & 0 & 0 & 0 & 0 & 0 & 0 & 0 & 0 & 0 & 0 & 0 & 0 & 0 & 0 & 0 & 0 & 1 & 0 & 0 & 0 & 0 & 0 & 0 & 0 & 1 & 0 & 0 & 0 & 0 \\
 0 & 0 & 0 & 0 & 0 & 0 & 0 & 0 & 0 & 0 & 0 & 0 & 0 & 0 & 0 & 0 & 0 & 0 & 0 & 0 & 1 & 0 & 0 & 0 & 0 & 0 & 0 & 0 & 1 & 0 & 0 & 0 \\
 0 & 0 & 0 & 0 & 0 & 0 & 0 & 0 & 0 & 0 & 0 & 0 & 0 & 0 & 0 & 0 & 0 & 0 & 0 & 0 & 0 & 1 & 0 & 0 & 0 & 0 & 0 & 0 & 0 & 1 & 0 & 0 \\
 0 & 0 & 0 & 0 & 0 & 0 & 0 & 0 & 0 & 0 & 0 & 0 & 0 & 0 & 0 & 0 & 0 & 0 & 0 & 0 & 0 & 0 & 1 & 0 & 0 & 0 & 0 & 0 & 0 & 0 & 1 & 0 \\
 0 & 0 & 0 & 0 & 0 & 0 & 0 & 0 & 0 & 0 & 0 & 0 & 0 & 0 & 0 & 0 & 0 & 0 & 0 & 0 & 0 & 0 & 0 & 1 & 0 & 0 & 0 & 0 & 0 & 0 & 0 & 1
\end{array}
\right).$$}
Note that the ideal $I_{\langle A \rangle}$ belongs to a polynomial ring in
$32$ indeterminates. Applying Algorithm \ref{alg_matrix_fresi} to the matrix $A$ (see Remark \ref{nota_libre_torsion}), we get
 $A$ is HNF-diagonalizable and its associated HNF-diagonal matrix
is
$$D=\left(\begin{array}{cccccccc} D_1 & \Theta & \Theta & \Theta & \Theta & \Theta & \Theta & \Theta \\
 \Theta & D_1 & \Theta & \Theta & \Theta & \Theta & \Theta & \Theta \\
\Theta & \Theta & D_1 & \Theta & \Theta & \Theta & \Theta & \Theta \\
\Theta & \Theta & \Theta & D_1 & \Theta & \Theta & \Theta & \Theta \\
\Theta & \Theta & \Theta & \Theta & D_1 & \Theta & \Theta & \Theta \\
\Theta & \Theta & \Theta & \Theta & \Theta & D_1 &\Theta & \Theta \\
\Theta & \Theta & \Theta & \Theta & \Theta & \Theta & D_1 & \Theta \\
\Theta  & \Theta & \Theta & \Theta & \Theta & \Theta & \Theta & D_1 \\
\Theta  & \Theta & \Theta & \Theta & \Theta & \Theta & \Theta & \Theta \\
\end{array}\right),$$ where $D_1=\left( \begin {array}{cccc} 1&0&0&-1\\\noalign{\medskip}0&1&0&1
\\\noalign{\medskip}0&0&1&1\end {array} \right)$, and the permutation matrix $Q$
swaps the columns of  $A$ grouping them in the following sets:
$$A_1=\{1,9,17,25\},\, A_2=\{2,10,18,26\},\, A_3=\{3,11,19,27\},\, A_4=\{4,12,20,28\},\,$$ $$A_5=\{5,13,21,29\},\,
A_6=\{6,14,22,30\},\, A_7=\{7,15,23,31\},\, A_8=\{8,16,24,32\}.$$ In this example, we identify the index set $A_i$ with the columns of $A.$ Once the decomposition has been done,  it only remains to do a fast computation
to obtain that
 the Markov basis of  $I_{\langle A_1 \rangle}=I_{\langle D_1 \rangle}\subset \k[x_1,x_9,x_{17},x_{25}]$ is $\{x_1x_{25}-x_9x_{17}\}$,
 this basis is unique,
 $I_{\langle A_1 \rangle}$ is a complete intersection
 and
 $\langle D_1\rangle$ is the gluing of the semigroups $S'_1=\langle(1,0,0),(-1,1,1)\rangle$ and $S'_2=\langle(0,1,0),(0,0,1)\rangle$. Then $\langle A_1 \rangle$ is the gluing of the semigroups generated by the columns $\{1,25\}$ and $\{9,17\}$ of $A.$ Therefore, by the results presented in this paper, the ideal $I_{\langle A \rangle}\subset \k [x_1,\ldots ,x_{32}]$
 has a unique Markov basis,
$\{x_1x_{25}-x_9x_{17},\,  x_2x_{26}-x_{10}x_{18},\,  x_3x_{27}-x_{11}x_{19},\,  x_{4}x_{28}-x_{12}x_{20},\,  x_{5}x_{29}-x_{13}x_{21},
x_{6}x_{30}-x_{14}x_{22},\,  x_{7}x_{31}-x_{15}x_{23},\,  x_{8}x_{32}-x_{16}x_{24}\},$
and  it is a complete intersection. Furthermore, the semigroup $S=\langle A\rangle$ is the gluing of two semigroups and it decomposes as the direct sum
of eight subsemigroups $S=\langle A_1\rangle\oplus \cdots \oplus \langle A_8\rangle.$

Note that instead of performing highly complex computations in the ring of polynomials in $32$ indeterminates, we perform
the computations in rings in four indeterminates. In the example and given the particular nature of the matrix $D$ we need only to calculate a
unique ideal.

In Table \ref{tabla}, using \cite[Corollary 14]{OjVi2},  we complete the study on the uniqueness of the Markov bases for the examples that are decomposable in the database \cite{DataBase}.
 In addition, we collect the number of semigroups in the irreducible decomposition and the number of
generators of each semigroup of the decomposition.
\begin{table}[h]
  \centering
    \begin{tabular}{||c|c|c|c||}
\hline
\hline
          & {\bf N$^o$ of semigroups} & {\bf N$^o$ of generators} & {\bf Unique} \\
\hline
\hline
    {\bf BM4r2-2\_bin} & 2     & 8--8  & no \\ \hline
    {\bf G15g\_bin} & 2     & 8--8  & YES \\ \hline
    {\bf G17g\_bin} & 4     & 4--4--4--4 & YES \\ \hline
    {\bf 5-4m1\_bin} & 2     & 16--16 & yes \\ \hline
    {\bf BM5r3-2\_bin} & 2     & 16--16 & no \\ \hline
    {\bf BM5r3-4\_bin} & 2     & 16--16 & no \\ \hline
    {\bf BPg\_bin} & 8     & 4-4-4-4-4-4-4-4 & YES \\ \hline
    {\bf G34g\_bin} & 2     & 16--16 & NO \\ \hline
    {\bf G40g\_bin} & 2     & 16--16 & NO \\ \hline
    {\bf G42g\_bin} & 2     & 16--16 & YES \\ \hline
    {\bf G45g\_bin} & 2     & 16--16 & YES \\ \hline
    {\bf G46g\_bin} & 4     & 8-8-8-8 & no \\ \hline
    {\bf G47g\_bin} & 2     & 16-16 & YES \\ \hline
    {\bf SPg\_bin} & 2     & 16--16 & YES \\ \hline
    {\bf 6-1I2\_bin} & 4     & 16-16-16-16 & yes \\ \hline
    {\bf G111g\_bin} & 2     & 32--32 & NO \\ \hline
    {\bf G117g\_bin} & 2     & 32--32 & NO \\ \hline
    {\bf G133g\_bin} & 2     & 32--32 & NO \\ \hline
    {\bf G135g\_bin} & 2     & 32--32 & NO \\ \hline
    {\bf G136g\_bin} & 2     & 32--32 & NO \\ \hline
    {\bf G144g\_bin} & 2     & 32--32 & NO \\ \hline
    {\bf G156g\_bin} & 2     & 32--32 & NO \\ \hline
    {\bf G158g\_bin} & 2     & 32--32 & NO \\ \hline
    {\bf G161g\_bin} & 4     & 16-16-16-16 & NO \\ \hline
    {\bf G162g\_bin} & 2     & 32--32 & NO \\ \hline
    {\bf G164g\_bin} & 2     & 32--32 & NO \\ \hline
    {\bf G165g\_bin} & 2     & 32--32 & YES \\ \hline
    {\bf G92g\_bin} & 2     & 32--32 & NO  \\ \hline \hline
    \end{tabular}
    \caption{Decomposable semigroups of \cite{DataBase}. Last column represents the uniqueness property. The normal letters "no/yes" correspond to the already solved cases in \cite{DataBase} and the capital ones "NO/YES" correspond to the solved cases in this work.}\label{tabla}
\end{table}
It is noteworthy that using the decomposition obtained in Table
\ref{tabla}, we lowered the time for computing the Markov bases of the ideals by more that 50\% using the program \verb"4ti2" (see \cite{4ti2}) for computing the Gr\"{o}bner bases.

\end{document}